\documentclass[10pt, a4paper]{article} 

\usepackage[utf8]{inputenc} 

\usepackage{graphicx} 
\usepackage[english]{babel}
\usepackage{booktabs} 
\usepackage{array} 
\usepackage{caption}

\usepackage{amsfonts,amsmath,amssymb,bbm,amsthm,amsbsy}
\usepackage[all]{xy}
\SelectTips{cm}{}

\usepackage[pagebackref,
    ,pdfborder={0 0 0}
  ,urlcolor=black,a4paper,hypertexnames=false]{hyperref}

\usepackage{color}
\usepackage{pdfcolmk}

\makeatletter
\def\blfootnote{\xdef\@thefnmark{}\@footnotetext}
\makeatother
\date{\today%
    \protect\blfootnote{\copyright{\ N.~Heuer, C.~L\"oh 2020}. 
    This work was supported by the CRC~1085 \emph{Higher Invariants} 
    (Universit\"at Regensburg, funded by the DFG).
    \\
    MSC~2010 classification: 57N65}}

\usepackage[nouppercase]{scrpage2}

\automark[subsection]{section}
\usepackage{ifthen}
\deftripstyle{myheadings}
             {\ifthenelse{\isodd{\value{page}}}{\leftmark}{\pagemark}}
             {}
             {\ifthenelse{\isodd{\value{page}}}{\pagemark}{\rightmark}}%
             {}{}{}%
\pagestyle{myheadings}               


\def\args{\;\cdot\;}

\def\hcomm{%
    \begin{tikzpicture}%
      \draw (0,0) node {\tiny$h$};
      \draw(0,0) circle (0.15);
  \end{tikzpicture}}

\def\varepsilon{\epsilon}
\def\longrightarrow{\to}

\def\phi{\varphi}

\newtheorem{thm}{Theorem}[section]
\newtheorem{lemma}[thm]{Lemma}
\newtheorem{prop}[thm]{Proposition}

\newtheorem{claim}[thm]{Claim}

\theoremstyle{remark}
\newtheorem{rmk}[thm]{Remark}

\theoremstyle{definition}
\newtheorem{defn}[thm]{Definition}
\newtheorem{exmp}[thm]{Example}

\newtheorem{quest}[thm]{Question}

\theoremstyle{theorem}
\newtheorem{theorem}{Theorem}


\usepackage{tikz}
\usepackage{tikz-cd}

\usepackage{IEEEtrantools}

\newenvironment{equ*}[1]{\begin{IEEEeqnarray*}{#1}}{\end{IEEEeqnarray*}}

\newcommand{\R}{\mathbb{R}}

\newcommand{\N}{\mathbb{N}}

\DeclareMathOperator{\SV}{SV}
\DeclareMathOperator{\id}{id}
\DeclareMathOperator{\im}{im}

\DeclareMathOperator{\lf}{lf}
\def\SVlf{\SV^{\lf}}
\def\connsum{\mathbin{\#}}
\def\svlf#1{%
  \| #1 \|^{\lf}}
\def\refet{EQT}
\def\refubc{UBC}

\DeclareMathOperator{\tame}{tame}
\def\SVtame{\SVlf_{\tame}}

\def\qand{\quad\text{and}\quad}

\title{The spectrum of simplicial volume\\ of non-compact manifolds}
\author{Nicolaus Heuer, Clara L\"oh}

\begin{document}

\maketitle

\begin{abstract}
  We show that, in dimension at least~$4$, the set of locally finite
  simplicial volumes of oriented connected open manifolds
  is~$[0,\infty]$. Moreover, we consider the case of tame open
  manifolds and some low-dimensional examples. 
\end{abstract}

\section{Introduction}

Simplicial volumes are invariants of manifolds defined in terms
of the $\ell^1$-semi-norm on singular homology~\cite{vbc}. 

\begin{defn}[simplicial volume]
  Let $M$ be an oriented connected $d$-manifold without boundary.
  Then the \emph{simplicial volume of~$M$} is defined by
  \[ \svlf M := \inf \bigl\{ |c|_1 \bigm| \text{$c \in C_d^{\lf}(M;\R)$ is
                a fundamental cycle of~$M$} \bigr\},
  \]
  where $C^{\lf}_*$ denotes the locally finite singular chain complex. 
  If $M$ is compact, then we also write~$\|M\| := \svlf M$. 
  Using relative fundamental cycles, the notion of simplicial volume can
  be extended to oriented manifolds with boundary. 
\end{defn}

Simplicial volumes are related to negative curvature, volume
estimates, and amenability~\cite{vbc}. In the present
article, we focus on simplicial volumes of \emph{non-compact}
manifolds.  Only few concrete results are known in this context: There
are computations for certain locally symmetric
spaces~\cite{loehsauer,loehsauerhilbert,bucherkimkim,kimkuessner} as well
as the general volume estimates~\cite{vbc}, vanishing
results~\cite{vbc,frigeriomoraschini}, and finiteness
results~\cite{vbc,loeh_l1}.

Let $d \in \N$, let $M(d)$ be the class of all oriented closed
connected $d$-manifolds, and let $M^{\lf}(d)$ be the class of
all oriented connected manifolds without boundary. Then we set
$  \SV(d)
:= \bigl\{ \|M\| \bigm| M \in M(d) \bigr\}
$ and
\begin{align*}
  \SVlf(d)
  & := \bigl\{ \svlf M \bigm| M \in M^{\lf}(d) \bigr\}.
\end{align*}
It is known that $\SV(d)$ is countable and that this set
has no gap at~$0$ if $d \geq 4$:

\begin{thm}[\protect{\cite[Theorem~A]{heuerloeh4mfd}}]\label{thm:nogap}
  Let $d \in \N_{\geq 4}$. Then $\SV(d)$ is dense in~$\R_{\geq 0}$
  and $0 \in \SV(d)$.
\end{thm}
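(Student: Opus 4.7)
The theorem splits into two parts, which I would treat separately.

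\emph{Witness $0 \in \SV(d)$.} Take $M = S^d$. Since $S^d$ is simply connected for $d \geq 2$, its fundamental group is amenable, and so Gromov's mapping theorem forces $\|S^d\| = 0$.

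\emph{Density.} The workhorse is Gromov's additivity of simplicial volume under connected sum in dimensions $\geq 3$:
\[
\|M \connsum N\| = \|M\| + \|N\|
\qquad\text{for all } M,N \in M(d),\; d \geq 3.
\]
This reduces density to the single task: for each $\varepsilon > 0$, exhibit a closed oriented connected $d$-manifold $M_\varepsilon$ with $0 < \|M_\varepsilon\| < \varepsilon$. Indeed, iterated self-connected-sums of such an $M_\varepsilon$ realize all values $n\cdot\|M_\varepsilon\|$ with $n \in \N$, which form an $\varepsilon$-net in $\R_{\geq 0}$; letting $\varepsilon \to 0$ yields density.

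\emph{The main obstacle} is thus the construction of $M_\varepsilon$. In dimension~$4$, my strategy would be to use the link between simplicial volume and bounded cohomology: for a closed oriented $4$-manifold, $\|M\|$ is controlled from above by the $\ell^1$-seminorm of the image of $[M]$ in $H_4(B\pi_1(M);\R)$, which in turn can be bounded using stable commutator length of suitable elements of $\pi_1(M)$. I would therefore cook up finitely presented groups $G_\varepsilon$ carrying elements with arbitrarily small but strictly positive $\scl$---available through small-cancellation-style constructions that finely tune $H^2_b$---and realize each $G_\varepsilon$ as $\pi_1$ of a closed oriented $4$-manifold in which the fundamental class pairs with a bounded cochain of correspondingly small norm. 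For $d \geq 5$ I would propagate the $4$-dimensional seeds upward; the delicate point here is that crossing with a sphere collapses simplicial volume while crossing with a hyperbolic factor inflates it, so the dimension-raising step requires either products with carefully chosen higher-genus surfaces (raising dimension by even amounts) or a surgery construction tailored to preserve the two-sided estimate $0 < \|M_\varepsilon\| < \varepsilon$.
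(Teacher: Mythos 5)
First, a structural remark: this paper does not prove Theorem~\ref{thm:nogap} at all --- it is imported verbatim from~\cite{heuerloeh4mfd} --- so there is no in-paper argument to compare against. Judged on its own terms, your skeleton is the correct one and does coincide with the strategy of the cited work: $\|S^d\|=0$ settles $0\in\SV(d)$ (degree-$2$ self-maps would do it even without the mapping theorem); additivity of simplicial volume under connected sums in dimension $\geq 3$ correctly reduces density to producing, for each $\varepsilon>0$, a closed oriented connected $d$-manifold with $0<\|M_\varepsilon\|<\varepsilon$; and the seeds in~\cite{heuerloeh4mfd} are indeed manufactured from stable commutator length.

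The problem is that the proposal stops exactly where the content of the theorem begins: everything after ``the main obstacle is thus the construction of $M_\varepsilon$'' is a plan, not an argument, and the two inputs it relies on are each substantial theorems that you neither prove nor precisely cite. (i) Finitely presented groups containing elements of arbitrarily small \emph{positive} $\scl$ do not come from ``small-cancellation-style constructions'' in any routine sense --- free groups have a uniform $\scl$-gap of $1/2$ (Duncan--Howie), and many natural classes of groups have similar gaps, so one must build rather special groups to drive $\scl$ below every $\varepsilon$ while keeping it nonzero. (ii) You need a mechanism converting $\scl(g)$, a degree-two quantity, into control of the simplicial volume of a closed $4$-manifold \emph{from both sides}: an upper bound by $\varepsilon$ (explicit efficient fundamental cycles) \emph{and} a strictly positive lower bound (non-trivial classes in bounded cohomology). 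Your phrase ``controlled from above by the $\ell^1$-seminorm of the image of $[M]$ in $H_4(B\pi_1(M);\R)$'' only restates Gromov's mapping theorem and does not explain how a degree-$4$ class is governed by $\scl$; in~\cite{heuerloeh4mfd} this is an identity of the shape $\|M(g)\|=c\cdot\scl(g)$ for a specific thickening/doubling construction, and it is the bulk of the work. Finally, the dimension-raising step is not closed: products with surfaces do propagate ``small and positive'' thanks to $\|M\|\cdot\|N\|\leq\|M\times N\|\leq\binom{m+n}{m}\cdot\|M\|\cdot\|N\|$, but starting from dimension~$4$ this only reaches even dimensions; for odd $d\geq 5$ you offer nothing concrete, and the unspecified ``surgery construction tailored to preserve the two-sided estimate'' is precisely the missing content rather than a proof of it.
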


In contrast, if we allow non-compact manifolds, we can realise
\emph{all} non-negative real numbers:

\begin{theorem}\label{theorem:svlfspectrum}
  Let $d\in \N_{\geq 4}$. Then~$\SVlf(d) = [0,\infty]$.
\end{theorem}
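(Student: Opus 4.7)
The plan is to reduce Theorem~\ref{theorem:svlfspectrum} to a single additivity statement for locally finite simplicial volume under infinite connected sums along a ray, and then to use the density of~$\SV(d)$ from Theorem~\ref{thm:nogap} to fill out the spectrum.

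More precisely, given any sequence~$(M_n)_{n \in \N}$ of oriented closed connected $d$-manifolds, I would form the \emph{infinite connected sum}~$M := \connsum_{n \in \N} M_n$ by iteratively performing connected sums with~$M_n$ at points of a proper ray escaping to infinity. The resulting~$M$ is oriented, connected, without boundary, and exhausted by compact codimension-zero submanifolds homeomorphic to~$(M_1 \connsum \cdots \connsum M_k) \setminus \mathring{B}^d$. The key step is to prove
\[
  \svlf(M) = \sum_{n \in \N} \|M_n\|.
\]
Granting this, the theorem follows at once: $\R^d$ realises~$0$; for~$\alpha \in (0, \infty)$ one greedily writes~$\alpha = \sum_n \alpha_n$ with~$\alpha_n \in \SV(d)$ (possible since~$\SV(d)$ is dense and contains~$0$) and chooses closed manifolds~$M_n$ with~$\|M_n\| = \alpha_n$; for~$\alpha = \infty$ one takes all~$M_n$ equal to a fixed closed hyperbolic $d$-manifold, so the partial sums diverge.

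For the upper bound~$\svlf(M) \leq \sum_n \|M_n\|$ I would fix~$\varepsilon > 0$, pick fundamental cycles~$c_n$ of~$M_n$ with~$|c_n|_1 \leq \|M_n\| + \varepsilon/2^n$, cut each~$M_n$ open along a small sphere, and splice the resulting relative cycles along small fundamental chains of the separating~$(d-1)$-spheres. Since~$\pi_1(S^{d-1})$ is amenable (in fact trivial for~$d \geq 3$), such boundary fillings can be taken of arbitrarily small~$\ell^1$-norm in the style of Gromov's Equivalence Theorem. The resulting formal sum is locally finite because each compact subset of~$M$ meets only finitely many summands, and defines a locally finite fundamental cycle of~$M$ of norm at most~$\sum_n \|M_n\| + \varepsilon$.

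The main obstacle is the matching lower bound~$\svlf(M) \geq \sum_n \|M_n\|$. Starting from a locally finite fundamental cycle~$c$ of~$M$ with~$|c|_1$ close to~$\svlf(M)$, one must produce, for each~$n$, a relative fundamental cycle of the pair~$(M_n \setminus \mathring{B}^d, S^{d-1})$ and control the sum of their~$\ell^1$-norms by~$|c|_1$. The amenability of~$\pi_1(S^{d-1})$ together with the mapping cone sequence in bounded cohomology yields the relative identity~$\|M_n \setminus \mathring{B}^d, S^{d-1}\| = \|M_n\|$ (Gromov, Kuessner, Bucher--Kim--Kim), so the lower bound reduces to a simultaneous relative straightening of~$c$ across all of the infinitely many neck regions with no loss of total~$\ell^1$-mass. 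Carrying out this straightening in a locally finite fashion, while still obtaining honest relative fundamental cycles on each piece, is where I expect the main technical difficulty to lie.
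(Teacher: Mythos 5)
Your overall strategy---an infinite ``linear'' connected sum of closed manifolds~$M_n$ with~$\|M_n\| = \alpha_n$ and~$\sum_n \alpha_n = \alpha$, using the density of~$\SV(d)$---is exactly the construction of the paper, and your sketch of the upper bound~$\svlf M \leq \sum_n \|M_n\|$ (relative fundamental cycles with small boundary via the Equivalence Theorem, spliced along the separating spheres using the uniform boundary condition for~$C_*(S^{d-1};\R)$) matches the paper's argument. However, the lower bound is a genuine gap in your proposal: you correctly identify it as the main difficulty, but the route you propose---a simultaneous relative straightening of a locally finite fundamental cycle across \emph{all} infinitely many neck regions with no loss of total mass---is not carried out, and it is not what is needed.

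The missing idea is a finite truncation by collapsing the tail. Given a locally finite fundamental cycle~$c$ with~$|c|_1 < \infty$ and~$\varepsilon > 0$, local finiteness lets you choose~$n$ so large that the part of~$c$ not touching the first~$n+1$ pieces has $\ell^1$-norm at most~$\varepsilon$ and~$\alpha - \sum_{k=0}^n \alpha_k \leq \varepsilon$. Now collapse everything beyond stage~$n+1$ to a single point~$x$; the image of the truncated chain is a relative cycle in~$C_d(W,\{x\};\R)$ with~$W \cong M_0 \connsum \dots \connsum M_n$, whose boundary has norm at most~$(d+1)\cdot\varepsilon$. Since~$d>1$, that boundary (a cycle supported on the point~$x$) bounds a chain in~$C_d(\{x\};\R)$ of comparable norm, so after a correction of norm at most~$(d+1)\cdot\varepsilon$ you obtain an honest fundamental cycle of the \emph{finite} connected sum~$W$. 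Then the ordinary additivity of simplicial volume under finite connected sums in dimension~$>2$ gives~$|c|_1 \geq \sum_{k=0}^n \|M_k\| - (d+2)\cdot\varepsilon \geq \alpha - (d+2)\cdot\varepsilon$, and letting~$\varepsilon \to 0$ finishes the proof. This completely sidesteps the infinite simultaneous straightening you were worried about: one never needs to produce relative fundamental cycles for all pieces at once, only a single fundamental cycle of a finite truncation for each~$\varepsilon$. Without some such device, your proposal does not yet constitute a proof of the inequality~$\svlf M \geq \alpha$.
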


The proof uses the no-gap theorem Theorem~\ref{thm:nogap} and
a suitable connected sum construction.

If we restrict to tame manifolds, then we are in a similar situation
as in the closed case:

\begin{theorem}\label{theorem:tame}
  Let $d\in \N$. Then the set $\SVtame(d) \subset [0,\infty]$ is
  countable. In particular, the set~$[0,\infty] \setminus \SVtame(d)$
  is uncountable.
\end{theorem}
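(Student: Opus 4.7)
The plan is to reduce $\SVtame(d)$ to the set of relative simplicial volumes of compact $d$-manifolds with boundary. Recall that an open manifold $M \in M^{\lf}(d)$ is tame if it is homeomorphic to the interior of some compact connected oriented $d$-manifold $N$ with non-empty boundary $\partial N$. The crucial ingredient is the identification
\[
  \svlf M = \|N, \partial N\|,
\]
i.e., the locally finite simplicial volume of the open interior coincides with the relative simplicial volume of the compact core. This goes back to Gromov and can be proved via a collar argument: given a relative fundamental cycle of $(N, \partial N)$, one extends it across a product collar of $\partial N$ by telescoping the boundary part infinitely many times, producing a locally finite fundamental cycle of $M$ of essentially the same $\ell^1$-norm. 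Conversely, a locally finite fundamental cycle can be truncated at a suitable exhaustion level to yield a relative fundamental cycle of $(N,\partial N)$ with norm bounded by that of the original cycle, up to an arbitrarily small error.

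Granting this identification, the countability of $\SVtame(d)$ follows from the well-known fact that in each dimension there are only countably many homeomorphism types of compact connected oriented $d$-manifolds with (possibly empty) boundary. This can be seen from the existence of finite handle or CW decompositions, together with the topological invariance of the simplicial volume. Since the set
\[
  \bigl\{ \|N, \partial N\| \bigm| N \text{ compact connected oriented $d$-manifold} \bigr\}
\]
is then countable and contains $\SVtame(d)$ by the identification above, we obtain that $\SVtame(d)$ is countable. The second assertion is then immediate because $[0,\infty]$ has the cardinality of the continuum, so removing a countable subset leaves uncountably many values.

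The main obstacle is the identification $\svlf M = \|N, \partial N\|$. The two $\ell^1$-norms a priori live on different chain complexes, and one must carefully verify that the collar extension and truncation procedures preserve the $\ell^1$-norm up to arbitrarily small error; this is precisely where the tameness hypothesis enters, since it supplies the product collar structure that makes the telescoping construction possible. Everything else in the proof is formal: countability of manifolds in a fixed dimension is standard, and the conclusion about the uncountable complement is just cardinal arithmetic.
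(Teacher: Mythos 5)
Your argument has a genuine gap at its central step: the claimed identification $\svlf{M} = \|N,\partial N\|$ is false in general. The truncation direction $\svlf{M} \geq \|N,\partial N\|$ is fine, but the telescoping extension over the collar does not preserve the $\ell^1$-norm: every repetition of the boundary cycle contributes its full norm, so the resulting locally finite chain typically has infinite $\ell^1$-norm. The construction can only be salvaged when the boundary fundamental class admits representatives of arbitrarily small norm, e.g.\ when $\partial N$ is amenable (this is exactly the setting of the Kim--K\"ussner results cited elsewhere in the paper). A concrete counterexample is $M = N \times \R$ for a closed oriented hyperbolic $(d-1)$-manifold~$N$ (Remark~\ref{rem:tameinfty}): there $\svlf{M} = \infty$ by the finiteness criterion, while the compact core $N \times [0,1]$ --- like every compact manifold with boundary --- has finite relative simplicial volume. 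A secondary issue is your justification of the countability of homeomorphism types of compact $d$-manifolds via finite handle or CW decompositions: this does not apply to topological manifolds (compact topological $4$-manifolds need not admit handle decompositions or triangulations). The countability itself is still true, but it is a nontrivial theorem of Cheeger and Kister.

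The good news is that you do not need the norm identity at all. What is actually required is only that $\svlf{\cdot}$ is an invariant of the tame manifold up to proper homotopy equivalence, together with the countability of the relevant collection of manifolds. The paper proceeds this way: it shows that every tame manifold has a finite CW-pair model of $(W,\partial W)$ (using only that compact manifolds are homotopy equivalent to finite complexes), that equivalent models yield properly homotopy equivalent interiors (via the topological collar theorem), and that there are only countably many homotopy types of finite CW-pairs; proper homotopy invariance of $\svlf{\cdot}$ then concludes. Your reduction to compact cores can be repaired along similar lines --- the interior is determined up to homeomorphism by the compact core, and Cheeger--Kister gives countability of cores --- but in either repair the quantitative comparison of $\svlf{M}$ with $\|N,\partial N\|$ must be abandoned, since it is the one step that is actually wrong.
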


As an explicit example, we compute~$\SVlf(2)$ and $\SVtame(2)$
(Proposition~\ref{prop:SVlf2}) as well as~$\SVtame(3)$
(Proposition~\ref{prop:SVtame3}). The case of non-tame
$3$-manifolds seems to be fairly tricky.

\begin{quest}
  What is $\SVlf(3)$\;?
\end{quest}

As $\SV(4) \subset \SVtame(4)$, we know that $\SVtame(4)$ contains
arbitrarily small transcendental numbers~\cite{heuerloehtrans}.

From a geometric point of view, the so-called Lipschitz simplicial volume
is more suitable for Riemannian non-compact manifolds than the locally finite
simplicial volume. It is therefore natural to ask the following: 

\begin{quest}
  Do Theorem~\ref{theorem:svlfspectrum} and Theorem~\ref{theorem:tame}
  also hold for the Lipschitz simplicial volume of oriented connected
  open Riemannian manifolds?
\end{quest}

\subsection*{Organisation of this article}

Section~\ref{sec:proofA} contains the proof of
Theorem~\ref{theorem:svlfspectrum}.  The proof of
Theorem~\ref{theorem:tame} is given in Section~\ref{sec:proofB}.
The low-dimensional case is treated in Section~\ref{sec:lowdim}. 

\section{Proof of Theorem~\ref{theorem:svlfspectrum}}\label{sec:proofA}

Let $d \in \N_{\geq 4}$ and let $\alpha \in [0,\infty]$. Because
$\SV(d)$ is dense in~$\R_{\geq 0}$ (Theorem~\ref{thm:nogap}), there exists a
sequence~$(\alpha_n)_{n \in \N}$ in~$\SV(d)$ with $\sum_{n=0}^\infty
\alpha_n = \alpha$.

\subsection{Construction}

We first describe the construction of a corresponding oriented
connected open manifold~$M$: 
For each~$n \in \N$, we choose an oriented closed connected
$d$-manifold~$M_n$ with~$\| M_n \| = \alpha_n$. Moreover, for~$n >0$, we set
\[ W_n := M_n \setminus (B_{n,-}^\circ \sqcup B_{n,+}^\circ),\]
where $B_{n,-} = i_{n,-}(D^d)$ and $B_{n,+} = i_{n,+}(D^d)$ are two
disjointly embedded closed $d$-balls in~$M_n$. Similarly, we
set~$W_0 := M_0 \setminus B_{0,+}^\circ$. Furthermore, we
choose an orientation-reversing homeomorphism~$f_n \colon S^{d-1} \longrightarrow S^{d-1}$. 
We then consider the infinite ``linear'' connected
sum manifold (Figure~\ref{fig:linearconnsum})
\begin{align*}
  M & := M_0 \connsum M_1 \connsum M_2 \connsum \dots
  \\
    & = (W_0 \sqcup W_1 \sqcup W_n \sqcup \dots)/\!\sim, 
\end{align*}
where $\sim$ is the equivalence relation generated by
\[ i_{n+1,-}(x) \sim i_{n,+}\bigl(f_n(x)\bigr)
\]
for all~$n \in \N$ and all~$x \in S^{d-1} \subset D^d$; we denote the
induced inclusion~$W_n \longrightarrow M$ by~$i_n$.
By construction, $M$ is connected and inherits an orientation
from the~$M_n$.

\begin{figure}
  \begin{center}
    \def\mfdpic#1#2{%
      \begin{scope}[shift={#1}]
        \draw[black!50, dashed] (0,-0.5) -- (0,0.5);
        \draw[black!50, dashed] (2,-0.5) -- (2,0.5);
        \draw (0,0.5) .. controls +(0:0.5) and +(180:0.5) .. (1,1)
                      .. controls +(0:0.5) and +(180:0.5) .. (2,0.5);
        \draw (0,-0.5) .. controls +(0:0.5) and +(180:0.5) .. (1,-1)
                       .. controls +(0:0.5) and +(180:0.5) .. (2,-0.5);
        \draw (1,0) circle (0.3);
        \draw (1,-1.3) node {$#2$};
      \end{scope}}
    \begin{tikzpicture}[thick]
      \draw (0,0.5) .. controls +(180:0.5) and +(90:1.8) .. (-2,0);
      \draw (0,-0.5) .. controls +(180:0.5) and +(-90:1.8) .. (-2,0);
      \draw (-1,0) circle (0.3);
      \draw (-1,-1.3) node {$W_0$};
      \mfdpic{(0,0)}{W_1}
      \mfdpic{(2,0)}{W_2}
      \mfdpic{(4,0)}{W_3}
      \draw (7,0) node {$\dots$};
    \end{tikzpicture}
  \end{center}

  \caption{The construction of~$M$ for the proof of Theorem~\ref{theorem:svlfspectrum}}
  \label{fig:linearconnsum}
\end{figure}

\subsection{Computation of the simplicial volume}

We will now verify that~$\svlf M =\alpha$:

\begin{claim}
  We have~$\svlf M \leq \alpha$. 
\end{claim}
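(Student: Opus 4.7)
Assume $\alpha<\infty$ (otherwise the claim is vacuous), fix $\varepsilon>0$, and choose positive reals $(\varepsilon_n)_{n\in\N}$ with $\sum_n \varepsilon_n<\varepsilon$. The plan is to build a locally finite fundamental cycle of~$M$ from relative fundamental cycles of the building blocks~$W_n$, with small corrections absorbed at the gluing spheres.

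First, for each $n\in\N$, the balls $B_{n,\pm}$ are contractible, so removing their interiors does not change the relative simplicial volume: $\|(W_n,\partial W_n)\|=\|M_n\|=\alpha_n$. Moreover, each boundary component is an $(d-1)$-sphere, simply connected (hence amenable) since $d\geq 4$. The equivalence theorem for pairs with amenable boundary (a standard consequence of Gromov's amenable-subcomplex machinery) then produces a relative fundamental cycle $c_n\in C_d(W_n;\R)$ with
\[
  |c_n|_1\ \leq\ \alpha_n+\varepsilon_n \qquad\text{and}\qquad |\partial c_n|_1\ \leq\ \varepsilon_n.
\]
Set $c:=\sum_n (i_n)_* c_n \in C_d^{\lf}(M;\R)$. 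The chain $c$ is locally finite because the decomposition $M=\bigcup_n i_n(W_n)$ is linear in the sense of Figure~\ref{fig:linearconnsum}: every compact subset of $M$ meets only finitely many $i_n(W_n)$. By construction, $|c|_1\leq \sum_n(\alpha_n+\varepsilon_n)<\alpha+\varepsilon$.

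The chain $c$ fails to be a cycle only at the gluing spheres. On the $n$-th gluing sphere $S_n\cong S^{d-1}$, the boundary contribution $\omega_n$ is the sum of two fundamental cycles of $S_n$ arising from $\partial c_n$ and $\partial c_{n+1}$; since $f_n$ is orientation-reversing, these have opposite fundamental classes, so $\omega_n$ is null-homologous in~$S_n$ with $|\omega_n|_1\leq \varepsilon_n+\varepsilon_{n+1}$. Using $\|S^{d-1}\|=0$ together with a quantitative filling estimate for null-homologous cycles on $S^{d-1}$ (available because $\pi_1(S^{d-1})=1$), I would choose chains $\tau_n\in C_d(M;\R)$ supported near $S_n$ with $\partial \tau_n=-\omega_n$ and $\sum_n|\tau_n|_1=O(\varepsilon)$. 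Then $c+\sum_n\tau_n$ is a locally finite fundamental cycle of $M$ of $\ell^1$-norm at most $\alpha+O(\varepsilon)$; letting $\varepsilon\to 0$ gives $\svlf M\leq \alpha$. The main obstacle is this quantitative filling step (equivalently, arranging the~$c_n$ so that their boundaries cancel along the gluings up to a controllable error); both ingredients rely essentially on the amenability of the boundary spheres of the pieces~$W_n$.
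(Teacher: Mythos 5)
Your proposal is correct and follows essentially the same route as the paper: take nearly norm-minimising relative fundamental cycles of the pieces $W_n$ with small boundaries via the equivalence theorem, glue them, and repair the mismatch on each gluing sphere by a controlled filling. The ``quantitative filling estimate'' you flag as the main obstacle is exactly the uniform boundary condition of Matsumoto--Morita for $C_*(S^{d-1};\R)$ in degree~$d-1$, which the paper invokes to produce the correction chains with norm $O(\varepsilon)$.
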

\begin{proof}
  The proof is a straightforward adaption of the chain-level
  proof of sub-additivity of simplicial volume with respect
  to amenable glueings. 
  
  In particular, we will use the uniform boundary
  condition~\cite{matsumotomorita}  and the
  equivalence theorem~\cite{vbc,bbfipp}:
  \begin{itemize}
  \item[UBC]\label{ubc}
    The chain complex~$C_*(S^{d-1};\R)$
    satisfies~$(d-1)$-UBC, i.e., there is a constant~$K$ such that:
    For each~$c \in \im \partial_d \subset C_{d-1}(S^{d-1};\R)$, there
    exists a chain~$b \in C_d(S^{d-1};\R)$ with
    \[ \partial_d b = c \qand |b|_1 \leq K \cdot |c|_1.
    \]
  \item[EQT]\label{et}
    Let $N$ be an oriented closed connected $d$-manifold, let $B_1, \dots, B_k$
    be disjointly embedded $d$-balls in~$N$, and
    let~$W := N \setminus (B_1^\circ \cup \dots, \cup B_1^\circ)$. Moreover,
    let $\varepsilon \in \R_{>0}$.
    Then
    \[ \|N\| = \inf \bigl\{ |z|_1 \bigm| z \in Z(W;\R),\ |\partial_d z|_1 \leq \varepsilon \bigr\},
    \]
    where $Z(W;\R) \subset C_d(W;\R)$ denotes the set of all relative
    fundamental cycles of~$W$.
  \end{itemize}

  Let $\varepsilon \in \R_{>0}$. 
  By~\refet, for each~$n \in\N$, there exists a relative fundamental cycle~$z_n \in Z(W_n;\R)$
  with
  \[ |z_n|_1 \leq \alpha_n + \frac1{2^n}\cdot \varepsilon
  \qand
     |\partial_d z_n|_1 \leq \frac1{2^n} \cdot \varepsilon.
  \]

  We now use~\refubc\ to construct a locally finite fundamental cycle
  of~$M$ out of these relative cycles: For~$n \in \N$, the boundary
  parts  
  $C_{d-1}(i_n;\R)(\partial_d z_n|_{B_{n,+}})$ and~$-C_{d-1}(i_{n+1};\R)(\partial_d
  z_{n+1}|_{B_{n+1},-})$ are fundamental cycles of the sphere~$S^{d-1}$
  (embedded via~$i_n \circ i_{n,+}$ and $i_{n+1} \circ i_{n+1,-}$ 
  into~$M$, which implicitly uses the orientation-reversing
  homeomorphism~$f_n$). 
  By~\refubc, 
  there exists a chain~$b_n \in C_d(S^{d-1};\R)$ with
  \begin{align*}
    \partial_d C_d(i_n \circ i_{n,+};\R) (b_n)
     = &\; C_{d-1}(i_n ;\R)(\partial_d z_n|_{B_{n,+}})\\
     + &\; C_{d-1}(i_{n+1};\R) (\partial_d z_{n+1}|_{B_{n+1,-}})
  \end{align*}
  and
  \[
    |b_n|_1 \leq K \cdot \Bigl(\frac1{2^n} + \frac1{2^{n+1}}\Bigr) \cdot \varepsilon
            \leq K \cdot \frac1{2^{n-1}} \cdot \varepsilon.
  \]
  A straightforward computation shows that
  \[ c := \sum_{n=0}^\infty C_d(i_n;\R)\bigl(z_n - C_d(i_{n,+};\R)(b_n)\bigr)
  \]
  is a locally finite $d$-cycle on~$M$. Moreover, the local contribution on~$W_0$
  shows that $c$ is a locally finite fundamental cycle of~$M$.
  By construction,
  \begin{align*}
    |c|_1 & \leq \sum_{n=0}^\infty \bigl(|z_n|_1 + |b_n|_1\bigr)
    \\
    & \leq \sum_{n=0}^\infty \Bigl( \alpha_n + \frac1{2^n} \cdot \varepsilon
                               + K \cdot \frac1{2^{n-1}} \cdot \varepsilon
                               \Bigr)
    \leq \sum_{n=0}^\infty \alpha_n + (2 + 4 \cdot K) \cdot \varepsilon
    \\
    & = \alpha + (2 + 4 \cdot K) \cdot \varepsilon.
  \end{align*}
  Thus, taking~$\varepsilon \rightarrow 0$, we obtain~$\svlf M\leq \alpha$.
\end{proof}

\begin{claim}\label{claim:geq}
  We have~$\svlf M \geq \alpha$.
\end{claim}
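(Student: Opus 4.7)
The lower bound will be shown by reversing the upper-bound argument: given any locally finite fundamental cycle $c \in C_d^{\lf}(M;\R)$ with $|c|_1 < \infty$ (otherwise there is nothing to prove), the plan is to extract from $c$, for each $N \in \N$, a relative fundamental cycle of the compact codimension-$0$ submanifold
\[V_N := i_0(W_0) \cup i_1(W_1) \cup \dots \cup i_N(W_N) \subset M,\]
whose boundary is a single $(d-1)$-sphere $S_N$. Capping off $V_N$ with a $d$-disk recovers the finite connected sum $\hat M_N := M_0 \connsum M_1 \connsum \dots \connsum M_N$, whose simplicial volume, by Gromov's additivity of simplicial volume under connected sums in dimension $d \geq 3$, equals
\[\|\hat M_N\| = \sum_{n=0}^N \|M_n\| = \sum_{n=0}^N \alpha_n.\]

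Given $\varepsilon > 0$, the core step is to construct a relative fundamental cycle $z_N \in Z(V_N;\R)$ with $|\partial z_N|_1 \leq \varepsilon$ and $|z_N|_1 \leq |c|_1 + C\cdot \varepsilon$ for some constant $C$ depending only on $d$. Applying \refet\ to the pair $(\hat M_N, V_N)$ then gives $|z_N|_1 \geq \|\hat M_N\| = \sum_{n=0}^N \alpha_n$, whence
\[|c|_1 + C\cdot \varepsilon \geq \sum_{n=0}^N \alpha_n.\]
Letting $\varepsilon \to 0$ and then $N \to \infty$ yields $|c|_1 \geq \alpha$, and taking the infimum over $c$ gives $\svlf M \geq \alpha$.

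To produce $z_N$ from $c$, fix a collar $S_N \times [0,1]$ of $S_N$ (on the $U_N$-side, where $U_N := i_{N+1}(W_{N+1}) \cup i_{N+2}(W_{N+2}) \cup \cdots$) and consider the pencil of parallel spheres $\{S_N^t\}_{t \in [0,1]}$. A Fubini/coarea-style average over $t$ uses the finiteness $|c|_1 < \infty$ to pick a level $t_N$ at which the total $\ell^1$-weight of simplices of $c$ whose image meets $S_N^{t_N}$ is at most $\varepsilon$. Truncating $c$ to the portion on the $V_N$-side of $S_N^{t_N}$ produces a chain whose boundary lies near $S_N^{t_N}$ with $\ell^1$-norm $\leq \varepsilon$; a correction via \refubc\ on $S_N^{t_N}$ (analogous to the gluing step in the proof of Claim~2.2) then turns this into the desired relative fundamental cycle $z_N$.

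The hard part will be the truncation step. Singular simplices do not naturally split along a hypersurface, and the coarea averaging works cleanly only once the simplices of $c$ have image of diameter strictly smaller than the collar width. Iterated barycentric subdivision inflates the $\ell^1$-norm by a factor of $((d+1)!)^k$ after $k$ steps, so a naive ``shrink all simplices'' approach is too lossy; instead one has to subdivide selectively (only on simplices that meet the chosen collar) or appeal to a preliminary small-simplices approximation of $c$, absorbing the resulting bounded cost into the $O(\varepsilon)$ error. This careful chain-level bookkeeping is where the actual technical work of the proof is concentrated, and it is the step parallel to the use of \refubc\ and \refet\ in Claim~2.2, applied now in the reverse direction.
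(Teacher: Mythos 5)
Your global strategy---truncate $c$ to an initial segment, recognise the finite connected sum $M_0 \connsum \dots \connsum M_N$, and invoke additivity of simplicial volume under connected sums (via \refet)---is the same as the paper's. But the one step you yourself flag as ``the hard part'', namely producing from $c$ a relative fundamental cycle of $V_N$ with small boundary and norm at most $|c|_1 + O(\varepsilon)$, is exactly the step you do not carry out, and the devices you propose for it do not obviously close the gap. The coarea average over the levels $S_N^t$ only bounds the crossing weight at some level by the total weight of simplices meeting the collar (a single large simplex meets \emph{every} level, so averaging gains nothing until all simplices are small); and for a \emph{lower} bound on $|c|_1$ you cannot simply subdivide the chain, since subdivision inflates $|c|_1$ and the inflated norm is then what you would be bounding from below. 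A selective cutting argument can be made to work (the total weight of simplices meeting the $N$-th collar does tend to $0$ as $N \to \infty$, by dominated convergence, because each singular simplex has compact image), but none of this bookkeeping is actually performed in your proposal, so as written there is a genuine gap at the decisive step.

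The paper sidesteps the cutting problem entirely. It takes $c_n := c|_{W_{(n)}}$, the sub-chain of all simplices whose images \emph{touch} $W_{(n)} = \bigcup_{k=0}^n i_k(W_k)$ (no simplex is ever cut), chooses $n$ with $|c - c_n|_1 \le \varepsilon$, and then applies the map $p \colon M \to W_{(n)}/i_n(B_{n,+})$ that collapses the whole tail to a single point $x$. The overhanging parts of the simplices of $c_n$ are simply crushed into $x$; the boundary $\partial_d c_n = -\partial_d(c - c_n)$ lands in $\{x\}$, has norm at most $(d+1)\cdot\varepsilon$, and can be coned off inside the point at essentially no cost since $d>1$. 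The result is a genuine fundamental cycle of $W_{(n)}/i_n(B_{n,+}) \cong M_0 \connsum \dots \connsum M_n$ of norm at most $|c_n|_1 + (d+1)\cdot\varepsilon$, and additivity finishes the proof. Adopting this collapsing trick would replace your entire truncation/subdivision discussion; note also that no appeal to \refubc{} is needed in this direction.
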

\begin{proof}
  Without loss of generality we may assume that $\svlf M$ is finite.
  Let $c \in C^{\lf}_d(M;\R)$ be a locally finite fundamental cycle
  of~$M$ with $|c|_1 < \infty$. For~$n \in \N$, we
  consider the subchain~$c_n := c|_{W_{(n)}}$ of~$c$, consisting of all simplices
  whose images touch~$W_{(n)} := \bigcup_{k=0}^n i_k(W_k) \subset M$.
  Because $c$ is locally finite, each~$c_n$ is a finite singular chain
  and $(|c_n|_1)_{n\in \N}$ is a monotonically increasing sequence
  with limit~$|c|_1$.

  Let $\varepsilon \in \R_{>0}$. Then there is an~$n \in \N_{>0}$ that
  satisfies 
  $|c-c_n|_1 \leq \varepsilon$ and $\alpha - \sum_{k=0}^n \alpha_k \leq \varepsilon$.
  Let
  \[ p \colon M \longrightarrow W_{(n)}/i_n(B_{n,+}) =: W
  \]
  be the map that collapses everything beyond stage~$n+1$ to a single
  point~$x$.
  Then $z := C_d(p;\R)(c_n) \in C_d(W,\{x\};\R)$ is a relative cycle
  and 
  \[ |\partial_d z|_1 \leq |\partial_d c_n|_1
  \leq |\partial_d (c-c_n)|_1
  \leq (d+1) \cdot |c- c_n|_1
  \leq (d+1) \cdot \varepsilon.
  \]
  Because~$d > 1$, there exists a chain~$b \in C_{d}(\{x\};\R)$ with
  \[ \partial_{d} b = \partial_d z
     \qand |b|_1 \leq |\partial_d z| \leq (d+1) \cdot \varepsilon.
  \]
  Then
  \[ \overline z := z - b \in C_d(W;\R)
  \]
  is a cycle on~$W$; because $z$ and $\overline z$ have the
  same local contribution on~$W_0$, the cycle~$z$ is a fundamental
  cycle of the manifold
  \[ W \cong M_0 \connsum \dots \connsum M_n.
  \]
  As $d > 2$, the construction of our chains and
  additivity of simplicial volume under connected sums~\cite{vbc,bbfipp}
  show that
  \begin{align*}
    |c|_1
    & \geq |c_n|_1
    \geq |z|_1
    \geq |\overline z|_1 - |b|_1
    \\
    & \geq \|W\| - (d+1) \cdot \varepsilon
    = \sum_{k=0}^n \|M_n\| - (d+1) \cdot \varepsilon
    \\
    & \geq \alpha - (d+2) \cdot \varepsilon.
  \end{align*}
  Thus, taking~$\varepsilon \rightarrow 0$, we obtain~$|c|_1 \geq
  \alpha$; hence, $\svlf M \geq
  \alpha$.
\end{proof}

This completes the proof of Theorem~\ref{theorem:svlfspectrum}.

\begin{rmk}[adding geometric structures]
  In fact, this argument can also be performed smoothly: The
  constructions leading to Theorem~\ref{thm:nogap} can be carried out in
  the smooth setting. Therefore, we can choose the~$(M_n)_{n \in \N}$
  to be smooth and equip~$M$ with a corresponding smooth
  structure. Moreover, we can endow these smooth pieces with Riemannian
  metrics. Scaling these Riemannian metrics appropriately shows that we
  can turn~$M$ into a Riemannian manifold of finite volume. 
\end{rmk}
  
\section{Proof of Theorem~\ref{theorem:tame}}\label{sec:proofB}

In this section, we prove Theorem~\ref{theorem:tame}, i.e., that the
set of simplicial volumes of tame manifolds is countable. 

\begin{defn}
  A manifold~$M$ without boundary is \emph{tame} if there exists a
  compact connected manifold~$W$ with boundary such that $M$ is
  homeormorphic to $W^\circ := W \setminus \partial W$.
\end{defn}

As in the closed case, our proof is based on a counting argument:

\begin{prop}\label{prop:counttame}
  There are only countably many proper homotopy types of tame manifolds.
\end{prop}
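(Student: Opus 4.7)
\emph{Proof plan.} By tameness, every tame manifold~$M \in M^{\lf}(d)$ is homeomorphic to~$W^\circ$ for some compact connected manifold~$W$ with boundary. I would split the argument into two steps: (a)~encoding~$(W,\partial W)$ up to pair-homotopy by countable combinatorial data, and (b)~showing that the proper homotopy type of~$W^\circ$ depends only on this data.

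For~(a), the key input is that every compact topological manifold, being a compact ANR, has the homotopy type of a finite CW complex (by the theorem of West; in dimensions away from~$4$ this also follows from handle decompositions). Applied to both~$W$ and to its compact boundary~$\partial W$, together with a cellular approximation making $\partial W\hookrightarrow W$ cellular, this exhibits $(W,\partial W)$ as homotopy equivalent, in the category of pairs of topological spaces, to a finite CW pair. Since there are only countably many finite CW pairs up to homotopy equivalence of pairs, this step is routine.

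For~(b), I would use Brown's collaring theorem to identify
\[ W^\circ \cong W \cup_{\partial W}\bigl(\partial W\times [0,\infty)\bigr).\]
This construction is functorial on pairs: a pair map $f\col (W,\partial W)\longrightarrow (W',\partial W')$ induces a map between the corresponding open-collar unions, given by~$f$ on~$W$ and by $f|_{\partial W}\times \id_{[0,\infty)}$ on the collar. The induced map is proper because $\partial W$ is compact, so preimages of compacta are contained in~$W\cup(\partial W\times [0,T])$ for some~$T$; the analogous assignment turns pair-homotopies into proper homotopies. Consequently, a pair-homotopy equivalence descends to a proper homotopy equivalence of the open-collar unions, and hence of~$W^\circ$.

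The main obstacle is step~(b): one must carefully check that the collar gluing lands in the proper category, and in particular that pair homotopies of~$W$ and of the collar that agree along~$\partial W$ can be glued into a single proper homotopy on~$W^\circ$, using the homotopy extension property for the cofibration $\partial W\hookrightarrow W$. Once this is established, the countability from~(a) yields Proposition~\ref{prop:counttame}.
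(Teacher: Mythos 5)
Your proposal is correct and follows essentially the same route as the paper: replacing $(W,\partial W)$ by a finite CW pair using the finiteness theorem for compact manifolds together with cellular approximation (the paper additionally spells out the mapping-cylinder and cofibration details you call ``routine''), counting finite CW pairs up to pair-homotopy, and then using the topological collar theorem to turn a pair-homotopy equivalence into a proper homotopy equivalence of the interiors. No substantive differences to report.
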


As we could not find a proof of this statement in the literature, we
will give a complete proof in Section~\ref{subsec:counttame} below. 
Theorem~\ref{theorem:tame} is a direct consequence of
Proposition~\ref{prop:counttame}:

\begin{proof}[Proof of Theorem~\ref{theorem:tame}]
  The simplicial volume~$\svlf{\cdot}$ is invariant under proper
  homotopy equivalence (this can be shown as in the compact case).
  Therefore, the countability of~$\SVlf(d)$ follows from the
  countability of the set of proper homotopy types of tame
  $d$-manifolds (Proposition~\ref{prop:counttame}).
\end{proof}

\begin{rmk}\label{rem:tameinfty}
  Let $d \in \N_{\geq 3}$. Then $\infty \in \SVtame(d)$: Let $N$
  be an oriented closed connected hyperbolic $(d-1)$-manifold
  and let $M := N \times \R$. Then $M$ is tame (as interior of~$N \times [0,1]$)
  and $\|N\| > 0$~\cite[Section~0.3]{vbc}\cite[Theorem~6.2]{thurston}. Hence, 
  by the finiteness criterion~\cite[p.~17]{vbc}\cite[Theorem~6.4]{loeh_l1},
  we obtain that $\svlf M = \infty$. 
\end{rmk}

\subsection{Counting tame manifolds}\label{subsec:counttame}

It remains to prove Proposition~\ref{prop:counttame}. We use the
following observations:

\begin{defn}[models of tame manifolds]
  \hfil
  \begin{itemize}
  \item A \emph{model} of a tame manifold~$M$ is a finite CW-pair~$(X,A)$ (i.e.,
    a finite CW-complex~$X$ with a finite subcomplex~$A$) that
    is homotopy equivalent (as pairs of spaces) to~$(W,\partial W)$,
    where $W$ is a compact connected manifold with boundary whose
    interior is homeomorphic to~$M$.
  \item Two models of tame manifolds are \emph{equivalent} if they
    are homotopy equivalent as pairs of spaces.
  \end{itemize}
\end{defn}

\begin{lemma}[existence of models]\label{lem:modelsexist}
  Let $W$ be a compact connected manifold. Then there exists a finite
  CW-pair~$(X,A)$ such that $(W,\partial W)$ and $(X,A)$ are homotopy
  equivalent pairs of spaces.

  In particular: Every tame manifold admits a model.
\end{lemma}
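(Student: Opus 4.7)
The plan is to realise the pair $(W,\partial W)$ as homotopy equivalent, as a pair of spaces, to a finite CW-pair, by combining a boundary collar with the finite CW homotopy type of compact ANRs.

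First, by Brown's collaring theorem, the boundary $\partial W$ admits an open collar neighbourhood in $W$; in particular, the inclusion $\partial W \hookrightarrow W$ is a closed cofibration. Both $W$ and $\partial W$ are compact topological manifolds, and hence compact ANRs (by a theorem of Hanner), so $(W,\partial W)$ is a compact ANR pair with cofibrant inclusion.

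Next, I would invoke West's theorem (the affirmative resolution of Borsuk's conjecture): every compact ANR has the homotopy type of a finite CW-complex. To upgrade this to a \emph{pair} statement, one first applies the absolute version to $\partial W$ to obtain a finite CW-complex $A$ and a homotopy equivalence $f \colon A \to \partial W$; one then replaces $W$ by the pushout $W \cup_{\partial W} \mathrm{Cyl}(f)$ along the cofibration $\partial W \hookrightarrow W$, which is homotopy equivalent to $W$ and now has $A$ as a cofibrantly included CW-subcomplex; and finally one applies West's theorem once more to obtain a finite CW-structure on the total space that extends the given CW-structure on $A$. The result is a finite CW-pair $(X,A)$ together with a pair-homotopy equivalence $(X,A) \simeq (W,\partial W)$. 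The ``in particular'' clause is then immediate: given a tame manifold $M$, pick a compact connected manifold $W$ with $W^\circ \cong M$; the pair $(X,A)$ just constructed is, by definition, a model of $M$.

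The main obstacle is the final extension step: arranging the CW-approximation of the total space to restrict to a prescribed finite CW-structure on $A$. A cleaner alternative, available whenever $W$ is smooth (which all constructions in the paper allow), is to use Whitehead--Munkres triangulability of smooth compact manifolds with boundary: any such $W$ admits a $C^1$-triangulation in which $\partial W$ is a subcomplex, and this directly yields the required finite CW-pair.
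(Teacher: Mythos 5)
The main line of your argument has a genuine gap at exactly the point you flag, and your proposed escape route does not cover the generality the lemma requires. West's theorem (or, in the paper, the Kirby--Siebenmann finiteness result for compact manifolds) produces a homotopy equivalence from some finite CW-complex \emph{to} the space; it does not produce a finite CW-\emph{structure on} the space itself, let alone one extending a prescribed CW-structure on the subspace $A$. Compact ANRs need not be CW-complexes, so the phrase ``a finite CW-structure on the total space that extends the given CW-structure on $A$'' asks for something strictly stronger than what any of the cited finiteness theorems deliver, and no argument is offered for it. Your fallback via Whitehead--Munkres triangulation only applies to smooth $W$, whereas the lemma is stated for arbitrary compact \emph{topological} manifolds and is used in the paper to count \emph{all} tame manifolds (Proposition~\ref{prop:counttame}); in dimension~$4$ there are compact topological manifolds admitting no smooth structure, so this route cannot close the gap. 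The paper explicitly warns that one cannot argue via triangulations for this reason.

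The correct way to finish from where you stand is the paper's mapping cylinder argument, which stays entirely on the CW side: choose finite CW-complexes $A$ and $Y$ with homotopy equivalences $f \colon A \to \partial W$ and $g \colon Y \to W$, form the composite $j = \overline g \circ i \circ f \colon A \to Y$, homotope it to a cellular map $j_c$, and take the mapping cylinder $Z$ of $j_c$; this is a finite CW-complex containing $A$ as a subcomplex and factoring $j_c$ through a homotopy equivalence $Z \to Y$. The resulting square only commutes up to homotopy, so one glues in one more cylinder to make it strictly commute, yielding a finite CW-pair $(X,A)$ with a strictly commuting square of homotopy equivalences into $(W, \partial W)$. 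Your observation that $\partial W \hookrightarrow W$ is a cofibration (via the collar theorem) is then exactly what is needed for the last step, upgrading the levelwise homotopy equivalences to a homotopy equivalence of pairs; this part of your proposal is correct and matches the paper. In short: keep the cofibration and ANR observations, but replace the ``extend the CW-structure'' step by the mapping cylinder factorisation.
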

\begin{proof}
  It should be noted that we work with topological manifolds; hence,
  we cannot argue directly via triangulations. Of course, the main
  ingredient is the fact that every compact manifold is homotopy equivalent
  to a finite complex~\cite{siebenmann,kirbysiebenmann}.

  Hence, there exist finite CW-complexes~$A$ and~$Y$ with homotopy
  equivalences~$f \colon A\longrightarrow \partial W$ and $g \colon Y
  \longrightarrow W$. Let $j := \overline g \circ i \circ f$, where $i \colon
  \partial W \hookrightarrow W$ is the inclusion and $\overline g$
  is a homotopy inverse of~$g$. By construction, the upper square in
  the diagram in Figure~\ref{fig:modelsexist} is homotopy commutative.

  As next step, we replace~$j \colon A \longrightarrow Y$ by a
  homotopic map~$j_c \colon A \longrightarrow Y$ that is cellular
  (second square in Figure~\ref{fig:modelsexist}).

  The mapping cylinder~$Z$ of~$j_c$ has a finite CW-structure (as
  $j_c$ is cellular) and the canonical map~$p \colon Z \longrightarrow
  Y$ allows to factor~$j_c$ into an inclusion~$J$ of a subcomplex and
  the homotopy equivalence~$p$ (third square in
  Figure~\ref{fig:modelsexist}).

  We thus obtain a homotopy commutative square
  \[ \xymatrix{%
    \partial W \ar@{}[dr] |{\hcomm}
    \ar[r]^-{i}
    & W
    \\
    A \ar[r]^-{J} \ar[u]^-{f}
    & Z\ar[u]_-{F := g \circ p}
    }
  \]
  where the vertical arrows are homotopy equivalences,
  the upper horizontal arrow is the inclusion, and the
  lower horizontal arrow is the inclusion of a subcomplex.

  Using a homotopy between~$i \circ f$ and $F \circ J$ and
  adding another cylinder to~$Z$, we can replace~$Z$ by
  a finite CW-complex~$X$ (that still contains~$A$ as
  subcomplex) to obtain a \emph{strictly} commutative diagram
  \[ \xymatrix{%
    \partial W \ar[r]^-{i}
    & W
    \\
    A \ar[u]_-{\simeq}^-{f} \ar[r]
    & X \ar[u]^-{\simeq}
    }
  \]
  whose vertical arrows are homotopy equivalences and whose
  horizontal arrows are inclusions.

  Because the inclusions~$\partial W \hookrightarrow W$
  (as inclusion of the boundary of a compact
  topological manifold) 
  and $A \hookrightarrow X$ (as inclusion of a subcomplex)
  are cofibrations, this already implies that the vertical
  arrows form a homotopy equivalence~$(X,A) \longrightarrow (W,\partial W)$
  of pairs~\cite[Chapter~6.5]{mayconcise}.
\end{proof}

\begin{figure}
  \begin{align*}
    \xymatrix{%
      \partial W \ar @{} [dr] |{\hcomm}
      \ar[r]^-{i}
      & W \ar@{-->}@<1mm>[d]^-{\overline g}
      \\
      A \ar[u]^-{f} \ar[r]^-{j} \ar@{}[dr]|{\hcomm}
      & Y \ar@<1mm>[u]^-{g}
      \\
      A \ar@{=}[u] \ar[r]^-{j_c} 
      & Y \ar@{=}[u]
      \\
      A \ar@{=}[u] \ar[r]^-{J}
      & Z \ar[u]_-{p}
    }
  \end{align*}

  \caption{Finding a model}
  \label{fig:modelsexist}
\end{figure}

\begin{lemma}[equivalence of models]\label{lem:equivalentmodels}
  If $M$ and $N$ are tame manifolds with equivalent models, then
  $M$ and $N$ are properly homotopy equivalent.
\end{lemma}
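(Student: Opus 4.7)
The plan is to reduce the statement to a purely manifold-theoretic claim about compact manifolds with boundary, and then to promote a homotopy equivalence of pairs to a proper homotopy equivalence of interiors by attaching infinite collars. Write $M = W^\circ$ and $N = V^\circ$ for compact connected manifolds $W, V$ with boundary, and let $(X,A), (X',A')$ be models of $M, N$ with $(X,A) \simeq (X',A')$ as pairs. By the definition of a model, $(W,\partial W) \simeq (X,A)$ and $(V, \partial V) \simeq (X', A')$ as pairs, so chaining these equivalences yields a homotopy equivalence of pairs $(W, \partial W) \simeq (V, \partial V)$. Thus it suffices to prove that such a pair-homotopy equivalence induces a proper homotopy equivalence $W^\circ \simeq V^\circ$.

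Using the collar neighborhood theorem for topological manifolds with boundary (Brown), identify $W^\circ$ homeomorphically with $\widehat W := W \cup_{\partial W} (\partial W \times [0,\infty))$ and $V^\circ$ with $\widehat V := V \cup_{\partial V} (\partial V \times [0,\infty))$. Let $f \colon (W, \partial W) \to (V, \partial V)$ and $g \colon (V, \partial V) \to (W, \partial W)$ be mutually inverse (as pairs) homotopy equivalences, and set $\phi := f|_{\partial W}$, $\psi := g|_{\partial V}$. Define $F \colon \widehat W \to \widehat V$ by $F|_W := f$ and $F(x,t) := (\phi(x), t)$ on the collar, and define $G \colon \widehat V \to \widehat W$ analogously. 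Both maps are continuous (they agree on $\partial W = \partial W \times \{0\}$) and proper, since on the collar the coordinate $t$ is preserved, so any compact subset of $\widehat V$ has $F$-preimage bounded in $t$ and hence contained in a compact subset of $\widehat W$.

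For the homotopies, let $H \colon W \times [0,1] \to W$ be a pair-homotopy from $g \circ f$ to $\id_W$, so $H(\partial W \times [0,1]) \subset \partial W$. Extend $H$ to $\widehat H \colon \widehat W \times [0,1] \to \widehat W$ by $\widehat H((x,t), s) := (H(x,s), t)$ on the collar; continuity across $\partial W \times \{0\}$ follows from $H$ respecting $\partial W$, and $\widehat H$ preserves the $t$-coordinate at each time $s$, so it is a proper homotopy from $G \circ F$ to $\id_{\widehat W}$. The analogous construction on the $V$-side yields $F \circ G \simeq \id_{\widehat V}$ properly, completing the proof.

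The main technical point throughout is verifying properness of the extended maps and homotopies; this hinges entirely on the fact that the collar $t$-coordinate is preserved, so preimages of compact sets remain bounded in $t$. The only other subtlety is ensuring continuity of the extensions at the gluing locus $\partial W \times \{0\}$, which follows from the pair-compatibility of $f, g, H$. Neither is a serious obstacle, but both need explicit verification because the relevant collar trick is where the topological (rather than merely homotopical) content of tameness enters.
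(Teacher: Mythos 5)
Your proposal is correct and follows essentially the same route as the paper: reduce to a homotopy equivalence of pairs $(W,\partial W) \simeq (V,\partial V)$, use the topological collar theorem to identify the interiors with the manifolds with an infinite collar attached, and extend the maps and homotopies over the collar so that the $t$-coordinate is preserved, which gives properness. Your explicit verification of continuity and properness is a slightly more detailed version of exactly what the paper does.
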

\begin{proof}
  As $M$ and $N$ admit equivalent models, there exist compact
  connected manifolds~$W$ and $V$ with boundary such that $M \cong
  W^\circ$ and $N \cong V^\circ$ and such that the pairs $(W,\partial
  W)$ and $(V,\partial V)$ are homotopy equivalent (by transitivity of
  homotopy equivalence of pairs of spaces). Let
  $(f, f_\partial) \colon (W,\partial W) \longrightarrow (V,\partial V)$
  and $(g,g_\partial) \colon (V,\partial V) \longrightarrow (W,\partial W)$
  be mutually homotopy inverse homotopy equivalences of pairs.

  By the topological collar
  theorem~\cite{brown_collar,connelly_collar}, we have homeomorphisms
  \begin{align*}
    M & \cong W \cup_{\partial W} \bigl( \partial W \times[0,\infty) \bigr)
    \\
    N & \cong V \cup_{\partial V} \bigl( \partial V \times[0,\infty) \bigr),  
  \end{align*}
  where the glueing occurs via the canonical inclusions~$\partial W
  \hookrightarrow \partial W \times [0,\infty)$ and $\partial V
    \hookrightarrow \partial V \times [0,\infty)$ at parameter~$0$.
    
  Then the maps~$f$ and $f_\partial \times \id_{[0,\infty)}$ glue to
  a well-defined proper continuous map~$F \colon M \longrightarrow N$
  and the maps~$g$ and $g_\partial \times \id_{[0,\infty)}$ glue to
  a well-defined proper continuous map~$G \colon N \longrightarrow M$.     

  Moreover, the homotopy of pairs between~$(f \circ g, f_\partial \circ
  g_\partial)$ and~$(\id_V,\id_{\partial V})$ glues into a proper
  homotopy between~$F \circ G$ and~$\id_M$. In the same way, there
  is a proper homotopy between~$G \circ F$ and~$\id_N$. Hence, the
  spaces~$M$ and $N$ are properly homotopy equivalent.
\end{proof}

\begin{lemma}[countability of models]\label{lem:countmodels}
  There exist only countably many equivalence classes of models.
\end{lemma}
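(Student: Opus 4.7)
The plan is to reduce the count to a combinatorial one by representing every model by its cellular attaching data, and then observing that there are only countably many such records. The guiding principle is that a finite CW-pair $(X,A)$, up to homotopy equivalence of pairs, is determined by the homotopy classes of its attaching maps, and the homotopy set of maps between finite CW-complexes is countable.

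The key background fact I would establish first is the following: for any finite CW-complex $Y$ and any $n \in \N$, the set $[S^{n},Y]$ of unpointed homotopy classes of maps is countable. This follows from simplicial approximation. Every finite CW-complex is homotopy equivalent to a finite simplicial complex $K$; any continuous map $S^{n} \longrightarrow K$ is, after iteratively subdividing a fixed triangulation of $S^{n}$, homotopic to a simplicial map; there are only countably many iterated barycentric subdivisions of $S^{n}$, and for each such subdivision there are only finitely many simplicial maps into the finite complex $K$.

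Given this, I would enumerate finite CW-pairs up to equivalence of pairs by an inductive cell-by-cell construction. First, $A$ is itself a finite CW-complex, so it can be assembled by attaching finitely many cells in order of dimension, each attachment specified by a homotopy class in $\pi_{n-1}$ of the current skeleton (a finite CW-complex). Once $A$ is chosen, $X$ is built from $A$ by attaching finitely many further cells, each again specified by a homotopy class in $\pi_{n-1}$ of the current larger skeleton. At every step the choice is drawn from a countable set, and only finitely many choices are made in total, so the set of possible construction records is countable. The standard fact that homotopic attaching maps produce homotopy equivalent adjunction spaces, strengthened to give a homotopy equivalence of pairs relative to the previous stage, shows that the equivalence class of the finished pair $(X,A)$ depends only on the sequence of homotopy classes in the record and not on the chosen representatives. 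Since by Lemma~\ref{lem:modelsexist} every tame manifold admits such a model, this yields only countably many equivalence classes.

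The main technical point is to justify, at each inductive step, that homotopic attaching maps produce not merely homotopy equivalent total spaces but homotopy equivalent \emph{pairs} of spaces (with the previously constructed stage as the distinguished subspace). This is routine and uses only that the inclusion of each skeleton into the next is a cofibration together with the homotopy extension property and the gluing lemma, but it is the place where one must be careful to work relatively. Once this is granted, the countability claim is immediate from the countability of $[S^{n-1},Y]$ for finite CW-complexes $Y$, and Proposition~\ref{prop:counttame} then follows from Lemma~\ref{lem:equivalentmodels}.
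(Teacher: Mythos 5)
Your argument is correct, but it takes a genuinely different route from the paper. The paper's proof is a two-line reduction: every finite CW-complex is homotopy equivalent to a finite simplicial complex, there are only countably many finite simplicial complexes, and each finite complex has only finitely many subcomplexes, so there are only countably many finite CW-pairs up to homotopy equivalence of pairs. You instead enumerate pairs by their construction records: build $A$ and then $X$ cell by cell, record each attaching map by its class in the (countable, by simplicial approximation) set $[S^{n-1},Y]$ of the stage $Y$ built so far, and invoke the standard fact that homotopic attaching maps yield adjunction spaces that are homotopy equivalent \emph{rel} the previous stage, so that the record determines the pair up to homotopy equivalence of pairs. Both arguments ultimately rest on simplicial approximation; the paper's version pushes all the work into the single statement that a finite CW-pair is pair-equivalent to a finite simplicial pair, while yours distributes it across the inductive steps and makes the relative bookkeeping explicit. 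Two small points to tidy up if you write this out in full: the attaching classes should be taken in the unbased set $[S^{n-1},Y]$ rather than in $\pi_{n-1}$ (the skeleta need not be connected or based), and the enumeration requires fixing one concrete representative space for each record prefix, together with the observation that a homotopy equivalence $Y\simeq Y'$ of stages induces a bijection of attaching classes under which the resulting adjunction pairs remain pair-equivalent. Neither issue is a gap; both are the routine relative arguments you already flag.
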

\begin{proof}
  There are only countably many homotopy types of finite CW-complexes
  (because every finite CW-complex is homotopy equivalent to a finite
  simplicial complex). Moreover, every finite CW-complex
  has only finitely many subcomplexes. Therefore, there are only countably
  many homotopy types (of pairs of spaces) of finite CW-pairs.
\end{proof}

\begin{proof}[Proof of Proposition~\ref{prop:counttame}]
  We only need to combine Lemma~\ref{lem:modelsexist},
  Lemma~\ref{lem:equivalentmodels}, and Lemma~\ref{lem:countmodels}.
\end{proof}

\section{Low dimensions}\label{sec:lowdim}

\subsection{Dimension~$2$}

We now compute the set of simplicial volumes of surfaces. We first
consider the tame case:

\begin{exmp}[tame surfaces]\label{exa:tamesurf}
  Let $W$ be an oriented compact connected surface with~$g\in \N$ handles
  and $b \in \N$ boundary components. Then the proportionality principle
  for simplicial volume of hyperbolic manifolds~\cite[p.~11]{vbc}
  (a thorough exposition is given, for instance, by Fujiwara and
  Manning~\cite[Appendix~A]{fujiwaramanning}) gives 
  \[
    \svlf{W^\circ} =
    \begin{cases}
      4 \cdot (g-1) + 2 \cdot b & \text{if~$g>0$}
      \\
      2 \cdot b - 4 & \text{if $g=0$ and $b>1$}
      \\
      0 & \text{if $g=0$ and $b \in \{0,1\}$.}
    \end{cases}
  \]  
\end{exmp}

\begin{prop}\label{prop:SVlf2}
  We have~$\SVlf(2) = 2 \cdot \N \cup \{\infty\}$
  and $\SVtame(2) = 2 \cdot \N$.
\end{prop}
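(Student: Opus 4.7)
The plan is to separate the tame and non-tame cases. The tame case can be read off directly from Example \ref{exa:tamesurf}; the substance of the argument lies in showing that every non-tame open surface has infinite locally finite simplicial volume.

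First, Example \ref{exa:tamesurf} gives $\svlf{W^\circ} = \max(4g + 2b - 4, 0)$ for a compact orientable surface $W$ of genus $g$ with $b$ boundary circles, and this is always an even non-negative integer. Conversely every $2k \in 2\cdot\N$ is realised: take $S^2$ for $k=0$ and $T^2 \setminus \{x_1,\dots,x_k\}$ (i.e., $g=1$, $b=k$) for $k \geq 1$. Hence $\SVtame(2) = 2\cdot\N$, and in particular $2\cdot\N \subseteq \SVlf(2)$.

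The remaining step is the main one: every non-tame oriented connected surface $M$ without boundary satisfies $\svlf M = \infty$. This simultaneously gives $\infty \in \SVlf(2)$ and rules out any value of $\SVlf(2)$ outside $2\cdot\N \cup \{\infty\}$, finishing the proposition. By the classification of non-compact surfaces, non-tameness of $M$ forces either infinite genus or infinitely many ends. I would choose an exhaustion $W_0 \subset W_1 \subset \cdots$ of $M$ by compact connected subsurfaces with each $\partial W_n$ a disjoint union of circles embedded in $M^\circ$; then either $\mathrm{genus}(W_n) \to \infty$ or $\#\pi_0(\partial W_n) \to \infty$, and in both cases Example \ref{exa:tamesurf} forces $\|W_n,\partial W_n\| \to \infty$.

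The key technical ingredient, and the main obstacle, is the exhaustion lower bound $\svlf M \geq \|W_n, \partial W_n\|$ for every $n$. I would establish this by adapting the chain-level argument of Claim~\ref{claim:geq}: given any locally finite fundamental cycle $c$ of $M$ with $|c|_1 < \infty$ and any $\varepsilon > 0$, take the finite subchain of simplices touching $W_n$, collapse everything outside a collar of $\partial W_n$ to a single point so as to obtain a relative cycle on $W_n$, and then apply $1$-UBC for $C_*(S^1;\R)$ (which holds because $\pi_1(S^1)$ is amenable, cf.~\cite{matsumotomorita}) on each boundary circle to correct the remaining boundary into a genuine relative fundamental cycle of $(W_n,\partial W_n)$ whose $\ell^1$-norm exceeds $|c|_1$ by at most $O(\varepsilon)$. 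Letting $\varepsilon \to 0$ and then $n \to \infty$ forces $\svlf M = \infty$, which completes the proof.
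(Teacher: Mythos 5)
Your outline of the tame case and the overall reduction (non-tame $\Rightarrow$ infinite volume) matches the paper, but the key technical step is where the argument breaks: the inequality $\svlf M \geq \|W_n,\partial W_n\|$ is \emph{false} for an arbitrary exhaustion piece, and the proposed collapse-plus-UBC proof of it does not go through. For a counterexample to the inequality as stated, take $M=\R^2$ and let $W_1$ be a disk with ten holes (the holes being filled in by $W_2$); this is a legitimate connected compact piece of an exhaustion of $\R^2$ with $\|W_1,\partial W_1\|=2\cdot 11-4=18$, yet $\svlf{\R^2}=0$. As for the proof mechanism: after you truncate $c$ to the simplices touching $W_n$, the boundary of that finite chain is supported on simplices lying in $M\setminus W_n$ -- possibly far from $\partial W_n$ -- so there is nothing on which to apply $1$-UBC for the boundary circles. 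Collapsing the complement of $W_n$ to a point does produce a cycle with small correctable boundary, but on the \emph{capped-off closed} surface $\widehat{W_n}$, not on the pair $(W_n,\partial W_n)$; this only yields $\svlf M\geq\|\widehat{W_n}\|=\max(4g(W_n)-4,0)$, i.e., the boundary contribution $2b$ is destroyed exactly by the collapse. That bound suffices when the genus of the exhaustion is unbounded, but it gives nothing for a non-tame genus-$0$ surface such as a sphere minus a convergent sequence of points together with its limit, where $g(W_n)$ is constant and all the growth of $\|W_n,\partial W_n\|$ comes from the number of boundary circles.

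The paper's proof splits precisely along this line. The unbounded-genus case is handled by the collapsing argument you describe (as for $T^2\connsum T^2\connsum\cdots$). In the bounded-genus case, the exhaustion is first reorganised so that each $M_{n+1}$ is obtained from $M_n$ by attaching planar pieces along a \emph{single} circle each (possible because the genus is constant and no disks need to be added); each such piece retracts onto its attaching circle, so there is a retraction $M_{n+k}\to M_n$ carrying $M_{n+k}\setminus M_n$ into $\partial M_n$. Pushing the truncated chain forward by this retraction is what actually produces a relative fundamental cycle of $(M_n,\partial M_n)$ of no larger norm, and it is exactly the existence of such a retraction (which fails for the disk-with-holes inside $\R^2$) that validates the inequality you want. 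To repair your argument you would need to add this reorganisation/retraction step (or some equivalent $\pi_1$-injectivity hypothesis on the exhaustion) rather than appeal to collapsing and UBC.
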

\begin{proof}
  We first prove~$2 \cdot \N \subset \SVtame(2) \subset \SVlf(2)$ and
  $\infty \in \SVlf(2)$, i.e., that all the given values may be
  realised: In view of Example~\ref{exa:tamesurf}, all even numbers
  occur as simplicial volume of some (possibly open) tame surface.
  
  Let
  \[ M := T^2 \connsum T^2 \connsum T^2 \connsum \dots
  \]
  be an infinite ``linear'' connected sum of tori~$T^2$. Collapsing~$M$ to
  the first~$g \in \N$ summands and an argument as in the proof of Claim~\ref{claim:geq}
  shows that
  \[ \svlf M \geq \| \Sigma_g \| = 4 \cdot g - 4
  \]
  for all~$g \in \N_{\geq 1}$. Hence, $\svlf M = \infty$. 
  
  It remains to show that $\SVlf(2) \subset 2 \cdot \N \cup \{\infty\}$:
  Let $M$ be an oriented connected (topological, separable, Hausdorff)
  $2$-manifold without boundary.
  Then $M$ admits a smooth structure~\cite{moise} and whence a proper
  smooth map~$p \colon M \longrightarrow \R$. Using suitable regular
  values of~$p$, we can thus write~$M$ as an ascending union
  \[ M = \bigcup_{n\in \N} M_n
  \]
  of oriented connected compact submanifolds (possibly with boundary)~$M_n$
  that are nested via~$M_0 \subset M_1 \subset \dots$. Then one of the following
  cases occurs:
  \begin{enumerate}
  \item\label{i:tame} There exists an~$N \in \N$ such that for all~$n
    \in \N_{\geq N}$ the inclusion~$M_n \hookrightarrow M_{n+1}$ is a
    homotopy equivalence.
  \item\label{i:nontame} For each~$N \in \N$ there exists an~$n \in
    \N_{\geq N}$ such that the inclusion~$M_n \hookrightarrow M_{n+1}$
    is \emph{not} a homotopy equivalence.
  \end{enumerate}

  In the first case, the classification of compact surfaces with
  boundary shows that $M$ is tame. Hence~$\svlf M \in 2 \cdot \N$
  (Example~\ref{exa:tamesurf}).

  In the second case, the manifold~$M$ is \emph{not} tame (which
  can, e.g., be derived from the classification of compact surfaces
  with boundary). We show that $\svlf M = \infty$. To this end. 
  we distinguish two cases:
  \begin{enumerate}
    \renewcommand{\labelenumi}{\alph{enumi}.}
  \item The sequence~$(h(M_n))_{n \in \N}$ is unbounded, where $h(\args)$
    denotes the number of handles of the surface.
  \item The sequence~$(h(M_n))_{n \in \N}$ is bounded.
  \end{enumerate}

  In the unbounded case, a collapsing argument (similar to the
  argument for~$T^2 \connsum T^2 \connsum \dots$ and Claim~\ref{claim:geq})
  shows that $\svlf M = \infty$.

  We claim that also in the bounded case we have~$\svlf M = \infty$: Shifting
  the sequence in such a way that all handles are collected in~$M_0$, we may
  assume without loss of generality that the sequence~$(h(M_n))_{n \in \N}$ is
  constant. Thus, for each~$n \in \N$, the surface~$M_{n+1}$ is obtained from~$M_n$
  by adding a finite disjoint union of disks and of spheres with finitely many
  (at least two) disks removed; we can reorganise this sequence in such a way
  that no disks are added. Hence, we may assume that $M_{n}$ is a retract of~$M_{n+1}$
  for each~$n \in \N$. 
  Furthermore, because we are in case~\ref{i:nontame}, the classification of compact
  surfaces shows (with the help of Example~\ref{exa:tamesurf}) that
  \[ \lim_{n\rightarrow \infty} \|M_n\| = \infty.
  \]
  
  Let $c \in C^{\lf}_2(M;\R)$ be a locally finite fundamental cycle
  of~$M$ and let $n\in \N$.  Because $c$ is locally finite, there is
  a~$k \in \N$ such that~$c|_{M_n}$ is supported on~$M_{n+k}$; the
  restriction~$c|_{M_n}$ consists of all summands of~$c$ whose supports
  intersect with~$M_n$. Because $M_n$ is a retract of~$M_{n+k}$, we
  obtain from~$c|_{M_n}$ a relative fundamental cycle~$c_n$ of~$M_n$
  by pushing the chain~$c|_{M_n}$ to~$M_n$ via a retraction~$M_{n+k}
  \longrightarrow M_n$. Therefore,
  \[ |c|_1 \geq |c|_{M_n}|_1 \geq |c_n|_1 \geq \|M_n\|.
  \]
  Taking~$n \rightarrow \infty$ shows that $|c|_1 = \infty$. Taking
  the infimum over all locally finite fundamental cycles~$c$ of~$M$ proves
  that~$\svlf M = \infty$.

  Moreover, Example~\ref{exa:tamesurf} shows that $\infty\not\in \SVtame(2)$.
\end{proof}

\subsection{Dimension~$3$}

The general case of non-compact $3$-manifolds seems to be rather
involved (as the structure of non-compact $3$-manifolds can get
fairly complicated). We can at least deal with the tame case:

\begin{prop}\label{prop:SVtame3}
  We have~$\SVtame(3) = \SV(3) \cup \{\infty\}$.
\end{prop}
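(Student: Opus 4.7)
My plan is to establish both inclusions of~$\SVtame(3) = \SV(3) \cup \{\infty\}$. For~$\supseteq$, every closed orientable $3$-manifold is tame (take $W = M$ with $\partial W = \emptyset$), so $\SV(3) \subseteq \SVtame(3)$; and $\infty \in \SVtame(3)$ by Remark~\ref{rem:tameinfty}.

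For the reverse inclusion, I would take $\alpha = \svlf M$ for a tame $3$-manifold $M = W^\circ$ with $W$ compact connected orientable, and split into cases based on the boundary~$\partial W$, a disjoint union of closed orientable surfaces. If $\partial W = \emptyset$, then $M = W$ is closed and $\alpha = \|W\| \in \SV(3)$. If some component of~$\partial W$ has genus~$\geq 2$, then that component has positive simplicial volume, and the finiteness criterion used in Remark~\ref{rem:tameinfty} (applied to a collar neighborhood of this boundary component in~$M$) would give $\alpha = \infty$.

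In the remaining case, $\partial W$ consists only of spheres and tori, all of which have amenable fundamental group. For this case my plan is to produce a closed orientable $3$-manifold~$N$ with $\|N\| = \alpha$ by capping~$\partial W$ with amenable pieces: attach a $3$-ball along each boundary sphere, and attach a twisted~$I$-bundle~$V$ over the Klein bottle along each boundary torus. Here $V$ is an orientable compact $3$-manifold with $\partial V = T^2$, amenable (virtually~$\Z^2$) fundamental group, and $\pi_1$-injective boundary inclusion. Both pairs $(D^3, S^2)$ and $(V, T^2)$ have vanishing relative simplicial volume. Gromov's additivity theorem for amenable glueings~\cite{vbc} then yields $\|N\| = \|W, \partial W\|$, and since $\partial W$ is amenable one also has $\svlf(W^\circ) = \|W, \partial W\|$ (via the equivalence theorem~\cite{vbc,bbfipp} combined with the vanishing of bounded cohomology of amenable groups). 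Therefore $\|N\| = \alpha$, so $\alpha \in \SV(3)$.

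The main technical obstacle is applying additivity cleanly when some torus boundary component of~$W$ is compressible, so that $\pi_1(T^2) \hookrightarrow \pi_1(W)$ fails the $\pi_1$-injectivity required by standard formulations of the theorem. A natural fix is to first replace~$W$ by a simpler model in which each torus boundary is incompressible, resolving any compressing disks before the amenable-cap construction is carried out.
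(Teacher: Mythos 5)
Your overall strategy is sound, and for the key reduction it is genuinely different from the paper's. Both arguments agree up to the point where $\partial W$ consists of spheres and tori, the spheres have been filled in to produce~$V$, and $\svlf M = \|W\| = \|V\|$ via amenability of the boundary. From there the paper invokes geometrisation: it decomposes the (by then irreducible) manifold along incompressible tori into Seifert fibred pieces (which contribute nothing) and hyperbolic pieces, and closes up the hyperbolic pieces by gluing in Seifert manifolds with $\pi_1$-injective torus boundary (torus knot complements). You instead cap each boundary torus directly with a twisted $I$-bundle over the Klein bottle; since that piece is orientable with amenable fundamental group, $\pi_1$-injective torus boundary, and vanishing relative simplicial volume, amenable additivity gives $\|N\| = \|V\|$ in one step. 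This avoids geometrisation entirely and is a genuine simplification of that part of the argument; what the paper's route buys is that after the JSJ step the relevant tori are incompressible for free, whereas you must arrange $\pi_1$-injectivity by hand.

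That last point is where your write-up has a real gap, which you correctly flag but do not close. ``Resolving compressing disks'' is not yet an argument: compressing a compressible boundary torus produces an embedded sphere that need not bound a ball, so the surgery does not obviously simplify~$W$. The standard fix --- which the paper also performs, for its own reasons --- is to first apply Kneser's prime decomposition together with additivity of simplicial volume under connected sums (itself an instance of amenable gluing along the simply connected spheres), discard the $S^1 \times S^2$ summands, and then use the fact that an irreducible orientable compact $3$-manifold with a compressible torus boundary component is a solid torus, which has vanishing relative simplicial volume and can be capped off harmlessly. After this reduction every remaining torus boundary component is incompressible, hence $\pi_1$-injective, and your Klein-bottle-bundle capping applies. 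So the proposal is correct in outline and fillable by standard $3$-manifold topology, but as written the compressible case is not actually handled.
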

\begin{proof}
  Clearly, $\SV(3) \subset \SVtame(3)$ and $\infty \in \SVtame(3)$
  (Remark~\ref{rem:tameinfty}).

  Conversely, let $W$ be an oriented compact connected $3$-manifold
  and let $M := W^\circ$. We distinguish the following cases:
  \begin{itemize}
  \item
    If at least one of the boundary components of~$W$ has genus at least~$2$, then
    the finiteness criterion~\cite[p.~17]{vbc}\cite[Theorem~6.4]{loeh_l1}
    shows that $\svlf M = \infty$.
  \item
    If the boundary of~$W$ consists only of spheres and tori, then we
    proceed as follows: In a first step, we fill in all spherical
    boundary components of~$W$ by $3$-balls and thus obtain an oriented
    compact connected $3$-manifold~$V$ all of whose boundary
    components are tori. In view of considerations on tame manifolds
    with amenable boundary~\cite{kimkuessner} and glueing
    results for bounded
    cohomology~\cite{vbc}\cite{bbfipp},
    we obtain that
    \[ \svlf M = \| W \| = \|V\|.
    \]
    By Kneser's prime decomposition theorem~\cite[Theorem~1.2.1]{afw}
    and the additivity of (relative) simplicial volume with respect to
    connected
    sums~\cite{vbc}\cite{bbfipp} in dimension~$3$, we may
    assume that $V$ is prime (i.e., admits no non-trivial
    decomposition as a connected sum). Moreover, because~$\|S^1 \times S^2\| = 0$,
    we may even assume that $V$ is irreducible~\cite[p.~3]{afw}.
    
    By geometrisation~\cite[Theorem~1.7.6]{afw}, then $V$ admits a
    decomposition along finitely many incompressible tori into Seifert
    fibred manifolds (which have trivial simplicial
    volume~\cite[Corollary~6.5.3]{thurston})  and hyperbolic pieces~$V_1, \dots, V_k$. As
    the tori are incompressible, we can now again apply
    additivity~\cite{vbc}\cite{bbfipp} to
    conclude that
    \[ \|V\| = \sum_{j=1}^k \|V_j\|.
    \]

    Let $j \in \{1,\dots, k\}$. Then the boundary components of~$V_j$
    are $\pi_1$-injective tori (as the interior of~$V_j$ admits a
    complete hyperbolic metric of finite volume)~\cite[Proposition~D.3.18]{benedettipetronio}. Let $S$
    be a Seifert $3$-manifold whose boundary is a $\pi_1$-injective
    torus (e.g., the knot complement of a non-trivial torus
    knot~\cite[Theorem~2]{moser}\cite[Lemma~4.4]{lueckl2}).
    Filling each boundary component
    of~$V_j$ with a copy of~$S$ results in an oriented closed
    connected $3$-manifold~$N_j$, which satisfies (again, by
    additivity)
    \[ \|N_j\| = \|V_j\| + 0 = \|V_j\|.
    \]
    Therefore, the oriented closed connected $3$-manifold~$N := N_1
    \connsum \dots \connsum N_k$ satisfies
    \[ \|N\| = \sum_{j=1}^k \|N_j\| = \sum_{j=1}^k \|V_j\| = \|V\|.
    \]
    In particular, $\svlf M = \|V\| = \|N\| \in \SV(3)$. 
    \qedhere
  \end{itemize}
\end{proof}

{\small
\bibliographystyle{alpha}
\bibliography{bib_l1}}

\vfill

\noindent
\emph{Nicolaus Heuer}\\[.5em]
  {\small
  \begin{tabular}{@{\qquad}l}
DPMMS,    University of Cambridge \\
    \textsf{nh441@cam.ac.uk},
    \textsf{https://www.dpmms.cam.ac.uk/$\sim$nh441}
  \end{tabular}}

\medskip

\noindent
\emph{Clara L\"oh}\\[.5em]
  {\small
  \begin{tabular}{@{\qquad}l}
    Fakult\"at f\"ur Mathematik,
    Universit\"at Regensburg,
    93040 Regensburg\\
    \textsf{clara.loeh@mathematik.uni-r.de}, 
    \textsf{http://www.mathematik.uni-r.de/loeh}
  \end{tabular}}

\end{document}